\newtheorem{comp prob}[thm]{Complementarity Problem}
\def\({\left(}
\def\){\right)}
\newtheorem{lema}{Lemma}[section]
\newtheorem*{teorema*}{Theorem}
\newtheorem{remark}[lema]{Remark}
\newtheorem{example}[lema]{Example}
\newtheorem{complementarity problem}[lema]{Complementarity Problem}
\newtheorem{corollary}[lema]{Corollary}
\newtheorem{theorem}[lema]{Theorem}
\newtheorem{complementarita prolema}[lema]{complementarity problem}
\newtheorem{definition}[lema]{Definition}
\hfill \fbox{}}
\hfill \fbox{}}
\def\beq{\begin{equation}}
\def\eeq{\end{equation}}
\def\epsilon{\varepsilon}
\begin{document}
\title{Li-Yorke and Expansive Composition operators on Lorentz spaces }
\author{Romesh kumar}
\address{Department of Mathematics, University of Jammu,
Jammu 180006, INDIA} 
\email{romeshmath@gmail.com}

\author{Rajat singh}
\address{Department of Mathematics, University of Jammu,
Jammu 180006, INDIA} \email{rajat.singh.rs634@gmail.com}

\thanks{The First author is supported by UGC Grant Under the Scheme of JRF}
\keywords{ Composition
operators, Li-Yorke chaotic, Lorentz spaces, multiplication operators, Expansive operators.\\ Mathematics Subject Classification: Primary 47A16, 47B33; Secondary 37D45, 37B05.}

\date{}

\begin{abstract}
In this paper, we investigate Li-Yorke composition operators and some of their variations on Lorentz spaces. Further, we also study expansive composition operators on these spaces. The work of the paper is essentially based on the work in \cite{BS}, \cite{NC15}, \cite{NC2018} and \cite{2022}.  
\end{abstract}

\maketitle

\section{Introduction and Preliminaries}
In the paper of Li and Yorke \cite{Li}, the concept of ``Chaos" was first introduced into mathematical literature in the context of interval map and became popular. Godefroy and Shapiro \cite{god} used Devaney's notion of chaos and were the first to introduce chaos into linear dynamics. Over the last two decades various authors have explored chaotic operators intensively. An operator on a Frechet space is hypercyclic and has a dense set of periodic points, then it is referred to be chaotic. Hypercyclic and chaotic operators are covered in depth in the books \cite{B1}, \cite{B2}, \cite{god}, \cite{KG1} and \cite{Shapiro}.\\
Some other essential concepts of chaos are Li-Yorke chaos, distributional chaos and specification property etc. see (\cite{NC13}, \cite{NC15} and \cite{Li}). Several authors have purposed various variations of these concepts. We will focus on Li-Yorke chaos and some of its variations. There are several intriguing Li-Yorke chaotic results for operators on Banach space in \cite{NC15}. N. C. Bernardes Jr et al. extended the major results of \cite{NC15} about Li-Yorke  chaos to the Frechet space setting and further for operators on $L^{p}$ space. The purpose of this note is to look into the concept of Li-Yorke chaos and some of its variations for Lorentz spaces framework. For more details on Lorentz spaces one can see (\cite{BS}, \cite{LOR}) and references therein. For more details on Li-Yorke we refer to \cite{NC13}, \cite{NC15} and \cite{Li} and reference therein.\\ 
The paper is structured as follows: Section 1 is introductory and we cite certain definitions and results which will be used throughout this paper. In Section 2 and Section 3, we explore the Li-Yorke composition operators  and discuss expansive composition operators on Lorentz spaces respectively. 

We assume that $X=\( X,\mathbb{A},\mu\)$ be a measure space with $\mu(X)\neq 0$. Let $\tau:X \to X$ be a measurable non-singular transformation $(i.e., \mu(\tau^{-1}(A))=0~\mbox{for each}~A\in \mathbb{A}~\mbox{whenever}~\mu(A)=0)$.\\
We define the distribution function $\mu_{g}$ of $g$, for $\lambda\geq 0$ as
$$\mu_{g} = \mu \(\{ x \in X:|g(x)|> \lambda \}\).$$
The non-increasing rearrangement of $g$ is
$$ g^{*}(t)= \inf \{ \lambda >0:\mu_{g}(\lambda)\leq t\}=\sup\{\lambda>0:\mu_{g}(\lambda)>t\}.$$
 The norm of the measurable function $g$ is defined as
$$||g||_{pq}= \begin{cases}
\left \{ \frac{q}{p}\int_{0}^{\infty}(t^{\frac{1}{p}}g^{**}(t))^{q}\frac{dt}{t}\right\}^{\frac{1}{q}}, &\text{if $1<p<\infty,~1\leq q<\infty$}\\
\displaystyle\sup_{t>0} t^{\frac{1}{p}}g^{**}(t), &\text{if $1<p\leq\infty,~q=\infty$;}\\
\end{cases}
$$ where $1<p\leq \infty,~ 1\leq q \leq \infty$.\\
The Lorentz space $L^{pq}(X)$, $1<p\leq \infty,~ 1\leq q \leq \infty$ are defined as
$$L^{pq}(X) = \{ g\in L(\mu): ||g||_{pq}<\infty \}.$$
Note that the Lorentz spaces are the Banach spaces for $1\leq q \leq p<\infty,~or~p=q=\infty$ and by using \cite[Page 251]{BS} 
\begin{eqnarray*}
||\chi_{A}||_{pq}^{q} &=& \frac{q}{p}\int_{0}^{\infty}(t^{\frac{1}{p}}\chi_{A}^{**}(t))^{q}\frac{dt}{t}\\
                      &=& p^{'}(\mu(A))^{\frac{q}{p}}
\end{eqnarray*}
where $\frac{1}{p}+\frac{1}{p^{'}}=1$.\\
We now define $C_{\tau}$ as the the linear transformation on $ L^{pq}(X),~1<p\leq \infty,~1\leq q \leq \infty$ into the linear space of all complex valued measurable function on measure space $\( X,\mathbb{A},\mu\)$ by $C_{\tau}g=g \circ \tau,~\forall g\in L^{pq}(X).$ Here the non-singularity of $\tau$ ensures that the operator is well defined in this case. If $C_{\tau}$ maps the $L^{pq}(X)$ into itself, then we call it composition operator on Lorentz space induced by $\tau$.
Let $\theta$ be a complex valued measurable function defined on $X$. We define the mapping $M_{\theta}:g \to \theta. g$, a multiplication operator induced by $\theta$. For composition operator on different function spaces see \cite{RK} and \cite{Shapiro} and reference therein. \\

First of all we recall the basic definitions which will be used for further research.
\begin{definition}\cite[Page 1]{Berm}
A continuous map $g:(M,d) \to (M,d)$  is said to be Li-Yorke chaotic if there exists an uncountable scrambled set $S\subset M$  such that each pair of distinct points $p,q \in S$ is a Li-Yorke pair for $g$ i.e.,
$$\displaystyle\lim_{n\to \infty}\inf d(g^{n}(p), g^{n}(q)) = 0 ~\mbox{and}~ \displaystyle\lim_{n\to \infty}\sup d(g^{n}(p), g^{n}(q)) > 0 .$$
where $(M,d)$ is a metric space.
\end{definition}
We say that $g$ is densely (generically) Li-Yorke chaotic whenever $S$ can be chosen to be dense (residual) in $M$. 
\begin{definition}\cite[Page 47]{B2}
\begin{itemize}
\item[(a)] If  $T$ is a linear operator and a vector $z \in X$, then we say that $z$ is an irregular vector for $T$ if 
$$\displaystyle\lim_{n \to \infty} \inf||T^{n}z||=0~\mbox{and}~\displaystyle\lim_{n \to \infty} \sup||T^{n}z||=\infty.$$
\item[(b)] If $T$ is a linear operator and a vector $z \in X$, then we say that $z$ is semi-irregular vector for $T$ if 
 $$\displaystyle\lim_{n \to \infty} \inf||T^{n}z||=0~\mbox{and}~\displaystyle\lim_{n \to \infty} \sup||T^{n}z||>0.$$ 
\end{itemize}
\end{definition}

Following result gives the equivalent conditions for any continuous linear operator $T$ on any Banach space to be Li-Yorke. 
\begin{theorem}\cite[Theorem 9]{NC15}
If $T \in L(X)$, then the following are equivalent
\begin{itemize}
\item[(i)] $T$ is Li-Yorke chaotic.
\item[(ii)] $T$ admits a semi-irregular vector. 
\item[(iii)] $T$ admits irregular vector. 
\end{itemize} 
\end{theorem}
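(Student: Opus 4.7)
The plan is to prove the equivalence as the cycle (iii) $\Rightarrow$ (ii) $\Rightarrow$ (i) $\Rightarrow$ (iii). The first implication is immediate from the definitions: if $\limsup_{n} \|T^n z\| = \infty$, then certainly $\limsup_{n} \|T^n z\| > 0$, while the $\liminf$ condition is the same, so every irregular vector is semi-irregular.

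For (ii) $\Rightarrow$ (i), given a semi-irregular vector $z$ for $T$, I would exhibit the uncountable scrambled set
\[
S := \{\alpha z : \alpha \in (0,1)\} \subset X.
\]
For any distinct $\alpha, \beta \in (0,1)$, linearity of $T^n$ yields
\[
\|T^n(\alpha z) - T^n(\beta z)\| = |\alpha - \beta|\, \|T^n z\|,
\]
so $\liminf_n \|T^n(\alpha z) - T^n(\beta z)\| = |\alpha - \beta|\liminf_n \|T^n z\| = 0$ and $\limsup_n \|T^n(\alpha z) - T^n(\beta z)\| = |\alpha - \beta|\limsup_n \|T^n z\| > 0$, so $(\alpha z, \beta z)$ is a Li-Yorke pair. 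Thus $T$ is Li-Yorke chaotic on $X$.

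For (i) $\Rightarrow$ (iii), I would first extract a semi-irregular vector: a Li-Yorke pair $(p,q)$ furnishes $z_0 := p - q$ with $\liminf_n \|T^n z_0\| = 0$ and $\limsup_n \|T^n z_0\| > 0$. The real work is upgrading $z_0$ to a vector whose orbit is genuinely \emph{unbounded}. I would carry this out by a Baire category argument on a suitable closed $T$-invariant subspace $Y \subset X$ (for instance, $Y$ built from the orbit of $z_0$). For each $k \in \NN$ the sets
\[
U_k := \{z \in Y : \|T^n z\| > k \text{ for some } n \geq 0\}, \qquad V_k := \{z \in Y : \|T^n z\| < 1/k \text{ for some } n \geq 0\}
\]
are open in $Y$; the irregular vectors in $Y$ are precisely $\bigcap_k U_k \cap \bigcap_k V_k$, a $G_\delta$. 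Density of each $V_k$ comes from perturbing vectors of $Y$ by scalar multiples $\varepsilon T^m z_0$ where $\|T^m z_0\|$ is small. Density of each $U_k$ is the crux: one shows that $T|_Y$ is not power bounded and then applies the uniform boundedness principle on $Y$ to produce, in any open ball, vectors whose orbit norms are unbounded.

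The main obstacle I anticipate is precisely the density of the sets $U_k$, i.e., transforming the qualitative ``positive $\limsup$'' into a quantitative ``unbounded $\limsup$'' on a dense $G_\delta$. The construction of the invariant subspace $Y$ on which $T|_Y$ is guaranteed not to be power bounded, while simultaneously retaining $z_0$ to ensure density of the $V_k$'s, is the delicate point where the Banach-space geometry and the Baire category theorem must be balanced carefully.
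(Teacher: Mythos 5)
First, a point of comparison: the paper does not prove this theorem at all --- it is quoted verbatim from \cite[Theorem 9]{NC15} --- so your proposal can only be measured against that source and against the way the paper reuses the argument later. Your implications (iii) $\Rightarrow$ (ii) and (ii) $\Rightarrow$ (i) are complete and correct; the scrambled set $\{\alpha z : \alpha\in(0,1)\}$ with $\|T^n(\alpha z)-T^n(\beta z)\|=|\alpha-\beta|\,\|T^n z\|$ is exactly the standard device. The architecture you sketch for (i) $\Rightarrow$ (iii) --- pass to $z_0=p-q$, work in $Y=\overline{\mathrm{span}}\,\{T^n z_0: n\ge 0\}$, and intersect two residual sets --- is also the route taken in \cite{NC15}, and it is echoed in this paper's own proof of Theorem \ref{thrm1} (``the set $R_2$ \dots is residual in $Y$'' via Banach--Steinhaus).

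The genuine gap is the step you yourself flag but do not carry out: you never prove that $T|_Y$ fails to be power bounded, and without that the density of the sets $U_k$ collapses. The missing argument is short and you should supply it: suppose $M:=\sup_n\|T^n|_Y\|<\infty$ and pick $n_k$ with $\|T^{n_k}z_0\|\to 0$. For any $n\ge n_k$ write $T^n z_0=T^{\,n-n_k}(T^{n_k}z_0)$, so $\|T^n z_0\|\le M\|T^{n_k}z_0\|$; letting $k\to\infty$ forces $\lim_n\|T^n z_0\|=0$, contradicting $\limsup_n\|T^n z_0\|>0$. Hence $\sup_n\|T^n|_Y\|=\infty$, and Banach--Steinhaus makes each open set $U_k$ dense in $Y$. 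For the $V_k$'s, the cleaner justification is not a perturbation but the observation that every $w=\sum_j c_j T^{m_j}z_0$ in the dense linear span of the orbit satisfies $\|T^{n_k}w\|\le \bigl(\sum_j |c_j|\,\|T^{m_j}\|\bigr)\|T^{n_k}z_0\|\to 0$, so $\liminf_n\|T^n w\|=0$ and each open $V_k$ contains a dense set. With these two points inserted, your Baire category argument closes and the proposal becomes a correct proof along essentially the same lines as the cited one.
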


\begin{definition}\cite{2022}
Let $T\in L(X)$ be linear operator. Then
\begin{itemize}
\item[(a)] $T$ is said to be (positively) expansive if for all $x\in S_{X}$ there exists $n \in \mathbb{Z} (n \in \mathbb{N})$ such that $||T^{n}x|| \geq 2$, where $S_{X}=\{x\in X:||x||=1 \}$.
\item[(b)] $T$ is (positively) uniformly expansive if there exists $n \in \mathbb{N}$ such that for all $x \in S_{X}$, $||T^{n}x|| \geq 2 ~\mbox{or}~||T^{-n}x|| \geq 2$ (for all $x \in S_{X},~||T^{n}x|| \geq 2$).
\end{itemize} 
\end{definition}

\begin{theorem}\cite[Proposition 19]{NC2018}\label{Thrm1.5}
Let $X$ be a Banach space and $T$ be operator on $X$. Then\\
(a)  $\displaystyle\sup_{n\in \mathbb{N}} ||T^{n}x||=\infty$ if and only if $T$ is positively expansive,  for each $0 \neq x\in  X$.\\
(b) $\displaystyle\lim_{n\in \infty} ||T^{n}x||=\infty$ uniformly on $S_{X}$ if and only if $T$ is uniformly positively expansive.\\ 
If $T$ is invertible, then\\
(c) $\displaystyle\sup_{n\in \mathbb{Z}} ||T^{n}x||=\infty$ if and only if $T$ is expansive, for each $ 0 \neq x\in X$.\\
(d) $S_{X} = A \cup B$ where $\displaystyle\lim_{n\in \infty} ||T^{n}x||=\infty$ uniformly on A and $\displaystyle\lim_{n\in \infty} ||T^{-n}x||=\infty$ uniformly on B if and only if $T$ is uniformly expansive.
\end{theorem}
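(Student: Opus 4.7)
The plan is to handle the four characterizations with a common template: each one just packages the qualitative definition into a quantitative statement about orbits, so the $(\Rightarrow)$ direction is essentially tautological, and the $(\Leftarrow)$ direction is proved by a bootstrapping/iteration argument that turns a single ``doubling'' event into unbounded (or uniformly unbounded) growth.

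For part (a), the forward direction is immediate: if $\sup_{n}\|T^n x\|=\infty$ for every nonzero $x$, then in particular for any $x\in S_X$ some $\|T^n x\|\geq 2$. For the converse I would take $0\neq x\in X$, set $y_0=x/\|x\|\in S_X$, use positive expansivity to get $n_1\in\mathbb{N}$ with $\|T^{n_1}y_0\|\geq 2$, then renormalize $y_1=T^{n_1}x/\|T^{n_1}x\|$ and repeat. By induction,
\[
\|T^{n_1+\cdots+n_k}x\|\geq 2^k\|x\|,
\]
which forces $\sup_n \|T^n x\|=\infty$. Part (c) is essentially the same argument with $\mathbb{N}$ replaced by $\mathbb{Z}$: the normalization step now may require $T^{-n}$, but that is allowed since $T$ is invertible.

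Parts (b) and (d) are where the real work is, because we need uniformity on $S_X$. For (b), uniform positive expansivity hands us a \emph{single} $n_0\in\mathbb{N}$ with $\|T^{n_0}x\|\geq 2$ for every $x\in S_X$. Running the iteration of part (a) with this fixed $n_0$ gives the uniform bound $\|T^{kn_0}x\|\geq 2^k$ on $S_X$. The missing piece is the non-multiples: for $n=kn_0+r$ with $0\leq r<n_0$, I would use the forward operator-norm bound
\[
\|T^{kn_0}x\|=\|T^{n_0-r}T^{n}x\|\leq \|T\|^{n_0}\,\|T^n x\|,
\]
(noting that $\|T\|\geq 1$, else $T^n\to 0$ contradicts expansivity), which gives $\|T^n x\|\geq 2^k/\|T\|^{n_0}\to\infty$ uniformly on $S_X$ as $n\to\infty$. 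The converse direction is trivial: uniform divergence supplies some $N$ with $\|T^N x\|\geq 2$ for all $x\in S_X$. For (d), the same template runs, except that the uniform expansivity hypothesis only supplies, for each $x\in S_X$, one of $\|T^{n_0}x\|\geq 2$ or $\|T^{-n_0}x\|\geq 2$; I would let $A$ be the set where the forward alternative holds, $B$ the set where the backward one holds, and apply the uniform iteration of (b) separately to each.

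The main obstacle I anticipate is precisely the sandwich step in (b) and (d): a positively expansive operator need not be bounded below, so the lower estimate on the intermediate iterates $T^{kn_0+r}x$ cannot come from invertibility of $T^r$ and has to be squeezed out of the \emph{upper} bound $\|T^{n_0-r}\|\leq \|T\|^{n_0}$ in the opposite direction, as above. Once this observation is in place, parts (b) and (d) reduce cleanly to the geometric growth already established for multiples of $n_0$, and the whole theorem follows.
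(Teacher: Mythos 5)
The paper does not actually prove this statement: it is imported verbatim as Proposition 19 of \cite{NC2018}, so there is no in-paper argument to compare yours against, and I am judging the proposal on its own merits. Parts (a) and (c) are correct: the renormalise-and-iterate bootstrap gives $\|T^{n_1+\cdots+n_k}x\|\ge 2^k\|x\|$, and in (c) it is irrelevant whether the partial sums $n_1+\cdots+n_k\in\mathbb{Z}$ repeat, since the supremum over $\mathbb{Z}$ still dominates every $2^k$. Part (b) is also essentially right, including the correct identification of the delicate point, namely that the intermediate iterates must be estimated from the \emph{upper} bound $\|T^{n_0-r}\|\le\max(1,\|T\|)^{n_0}$ rather than from any lower bound on $T^{r}$; there is only an off-by-one in your sandwich identity ($T^{n_0-r}T^{kn_0+r}=T^{(k+1)n_0}$, not $T^{kn_0}$), which does not affect the conclusion.

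The genuine gap is in part (d). Setting $A=\{x\in S_X:\|T^{n_0}x\|\ge 2\}$ and $B=\{x\in S_X:\|T^{-n_0}x\|\ge 2\}$ and then ``applying the uniform iteration of (b) separately to each'' does not go through as stated: the iteration in (b) renormalises $y=T^{n_0}x/\|T^{n_0}x\|$ and reapplies the hypothesis to $y$, and in (b) the hypothesis supplies the forward alternative for \emph{every} point of $S_X$. In (d) the hypothesis only guarantees $y\in A\cup B$, and if $y$ landed in $B$ the induction producing $\|T^{kn_0}x\|\ge 2^k$ on $A$ would break down. You need the extra observation that this cannot happen: if $x\in A$ and, inductively, $\|T^{kn_0}x\|\ge 2\|T^{(k-1)n_0}x\|$, then for $y_k=T^{kn_0}x/\|T^{kn_0}x\|$ one has $\|T^{-n_0}y_k\|=\|T^{(k-1)n_0}x\|/\|T^{kn_0}x\|\le 1/2<2$, so $y_k\notin B$, hence $y_k\in A$ and the forward doubling propagates; the symmetric argument with $T^{-1}$ handles $B$. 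This small lemma is the actual content of (d), and without it that part of the proof is incomplete (the remaining steps of (d), including the trivial converse obtained by taking $n=\max(N_A,N_B)$, are fine).
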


\section{Li-Yorke composition operator on Lorentz space}
In this section, we have proved a necessary and sufficient condition for composition operator to be Li-Yorke.
\begin{theorem}\label{thrm1}
Let $\( X,\mathbb{A},\mu\)$ be a measure space and $\tau:X \to X$ be a non-singular measurable transformation. Then composition operator $C_{\tau}$ on $L^{pq}(X)~1\leq p<\infty,~1\leq q\leq \infty$ is Li-Yorke chaotic iff there is an increasing sequence of positive integers and non-empty family of measurable sets $A_{i}$ of finite positive measure $\mu$ such that 
\begin{itemize}
\item[(i)] $\displaystyle\lim_{j\to \infty}\mu(\tau^{-\alpha_{j}}(A_{i}))=0,~\forall~i \in \mathbb{N}.$
\item[(ii)] $\sup\left\{\frac{\mu \circ \tau^{-n}(A_{i})}{\mu(A_{i})}:i \in I,n\in\mathbb{N} \right\}=\infty.$
\end{itemize}  
\end{theorem}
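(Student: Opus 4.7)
The plan is to invoke Theorem 1.3 (the equivalence of Li-Yorke chaoticity, existence of a semi-irregular vector, and existence of an irregular vector) and translate conditions (i) and (ii) into statements about normalized characteristic functions. The core computation, obtained from the identity $C_\tau^n\chi_A=\chi_{\tau^{-n}(A)}$ and the Lorentz-norm formula $\|\chi_A\|_{pq}^q=p'(\mu(A))^{q/p}$ recalled in the introduction, is
$$\frac{\|C_\tau^n\chi_A\|_{pq}}{\|\chi_A\|_{pq}}=\left(\frac{\mu(\tau^{-n}(A))}{\mu(A)}\right)^{\!1/p}.$$
Thus (i) is equivalent to $\liminf_{j\to\infty}\|C_\tau^{\alpha_j}(\chi_{A_i}/\|\chi_{A_i}\|_{pq})\|_{pq}=0$ for every $i$, and (ii) is equivalent to $\sup_{i,n}\|C_\tau^n(\chi_{A_i}/\|\chi_{A_i}\|_{pq})\|_{pq}=\infty$.

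For sufficiency, assuming (i) and (ii), the goal is to produce an irregular (equivalently, semi-irregular) vector. I would split into two cases. If there exists a single index $i_0$ with $\sup_n \mu(\tau^{-n}(A_{i_0}))/\mu(A_{i_0})=\infty$, then $z=\chi_{A_{i_0}}/\|\chi_{A_{i_0}}\|_{pq}$ itself is irregular: $\limsup\|C_\tau^n z\|=\infty$ by this choice and $\liminf\|C_\tau^n z\|=0$ along $\alpha_j$ by (i). Otherwise, use (ii) to extract sequences $(i_k, n_k)$ with $\mu(\tau^{-n_k}(A_{i_k}))/\mu(A_{i_k})\to\infty$, and build a candidate irregular vector as a series
$$g=\sum_{k=1}^{\infty}c_k\,\frac{\chi_{A_{i_k}}}{\|\chi_{A_{i_k}}\|_{pq}},$$
choosing $c_k\downarrow0$ fast enough to guarantee $L^{pq}$-convergence but slowly enough that $\|C_\tau^{n_k}g\|_{pq}\to\infty$ while $\|C_\tau^{\alpha_j}g\|_{pq}\to 0$ along a subsequence forced by (i). Passing to sparse subsequences of $(i_k)$ and $(n_k)$, and if needed disjointifying the relevant sets via measurable manipulations, should let the $k$-th term dominate the partial norms at time $n_k$.

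For necessity, starting from an irregular vector $g\in L^{pq}(X)$, I would pick sequences $\beta_j$ and $m_k$ with $\|C_\tau^{\beta_j}g\|_{pq}\to 0$ and $\|C_\tau^{m_k}g\|_{pq}\to\infty$. Approximating $g$ in $L^{pq}$-norm by simple functions $\sum_i a_i\chi_{A_i}$ with the $A_i$'s of finite positive measure and pairwise disjoint, and exploiting the monotonicity and rearrangement properties of the Lorentz norm (together with the identity displayed above applied to each $A_i$), the smallness of $\|C_\tau^{\beta_j}g\|_{pq}$ transfers to smallness of $\mu(\tau^{-\beta_j}(A_i))$ for each $i$, yielding (i) with $\alpha_j=\beta_j$; and the unboundedness of $\|C_\tau^{m_k}g\|_{pq}$ forces (ii) for at least one $A_i$ in the support decomposition.

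The main obstacle I anticipate is the series construction in the sufficiency direction: one needs the contribution of the $k$-th summand at time $n_k$ to dominate the tail and the earlier terms, which requires a careful inductive choice of $(i_k,n_k,c_k)$ (and possibly a disjointifying preprocessing of the family $\{A_i\}$) to avoid destructive interference between iterates. The necessity direction is expected to be more routine, given the explicit Lorentz-norm formula on characteristic functions.
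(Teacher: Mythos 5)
Your necessity argument is essentially the paper's: the paper also decomposes the irregular vector $g$ into level sets (specifically the dyadic sets $A_i=\{x: 2^{i-1}<|g(x)|<2^i\}$), gets (i) directly from $\|C_{\tau^{\alpha_j}}g\|_{pq}\to 0$, and obtains (ii) by contraposition — if $\mu(\tau^{-n}(A_i))\le M\mu(A_i)$ for all $i,n$, then a rearrangement estimate gives $(g\circ\tau^n)^{*}(t)\le g^{*}(t/M)$ and hence $\|C_{\tau^n}g\|_{pq}\le M^{1/p}\|g\|_{pq}$, contradicting unboundedness of the orbit. Your normalization identity $\|C_\tau^n\chi_A\|_{pq}/\|\chi_A\|_{pq}=(\mu(\tau^{-n}(A))/\mu(A))^{1/p}$ is the right computation (and is in fact slightly more careful than the paper, which drops the exponent $1/p$; the conclusion is unaffected since a supremum is infinite iff its $p$-th root is).

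The sufficiency direction is where you diverge, and where your proposal has a real gap that you yourself flag: the inductive choice of $(i_k,n_k,c_k)$ so that the $k$-th summand dominates at time $n_k$ without interference from the other terms. This is exactly the delicate point, and the paper avoids it entirely with a Baire-category argument. Set $Y=\overline{\mathrm{span}}\{\chi_{A_i}: i\in I\}$. Condition (i) makes the set $R_1$ of vectors in $Y$ some subsequence of whose orbit tends to $0$ residual in $Y$ (it contains a dense subspace, namely finite linear combinations of the $\chi_{A_i}$, and is a $G_\delta$). Condition (ii), via your own identity applied to the normalized functions $g_i=\chi_{A_i}/\|\chi_{A_i}\|_{pq}$, gives $\sup_n\|C_{\tau^n}|_Y\|=\infty$, so by the Banach--Steinhaus theorem the set $R_2$ of vectors in $Y$ with unbounded orbit is residual in $Y$. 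Any $g\in R_1\cap R_2$ is then irregular, hence $C_\tau$ is Li--Yorke chaotic by the equivalence you quote. This is the standard device from Bernardes--Bonilla--M\"uller--Peris, and it is what you should substitute for the series construction: it buys you the existence of the irregular vector with no disjointification, no sparse subsequences, and no domination estimates. Your explicit construction could probably be pushed through, but as written it is a plan rather than a proof, and the uniform-boundedness route is both shorter and already available from the tools you cite.
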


\begin{proof}
Suppose $C_{\tau}$ is Li-Yorke chaotic and $g \in L^{pq}(X)$ be an irregular vector for $C_{\tau}$. Now, let the measurable set $A_{i}=\{x\in X:2^{i-1}<|g(x)|<2^{i} \}$ and $I=\{i \in \mathbb{Z}:\mu(A_{i})>0\}.$ Then, $0<\mu(A_{i})<\infty.$ As $g$ be an irregular vector for $C_{\tau}$, so there is an increasing sequence of positive number $\{\alpha_{j}\}_{j\in\mathbb{N}}$ such that $\displaystyle\lim_{j\to \infty}||C_{\tau^{\alpha_{j}}}g||_{pq}=0$. This implies that (i) holds.

 Now, suppose that the condition (ii) does not holds. Then there is a positive constant $M<\infty$ such that $$\mu \circ \tau^{-n}(A_{i}) \leq M\mu(A_{i}),~\mbox{whenever}~i\in \mathbb{Z},n\in \mathbb{N}.$$
Thus, for each $n \in \mathbb{N},~t \geq 0$
\begin{eqnarray*}
(g \circ \tau^{n})(t) &=& \displaystyle\sum_{n \in\mathbb{N}} \inf\{s>0:\mu\{x\in X:|g(\tau^{n}(x))|>s\}\leq t\} \\
                      &=& \displaystyle\sum_{n \in\mathbb{N}} \inf\{s>0:\mu \tau^{-n}\{x\in X:|g(x)|>s\}\leq t\}\\
                      &\leq& \displaystyle\sum_{n \in\mathbb{N}} \inf\{s>0:M \mu \{x\in X:|g(x)|>s\}\leq t\} \\
											&\leq& \displaystyle\sum_{n \in\mathbb{N}} \inf\left\{2^{i-1}>0:\mu\{x\in X:|g(x)|>2^{i-1}\}\leq \frac{t}{M} \right\}\\
                      &\leq& \displaystyle\sum_{n \in\mathbb{N}} g^{*}\(\frac{t}{M}\).
\end{eqnarray*}
 Consequently we get
\begin{eqnarray*}
(g \circ \tau^{n})^{**}(t) &\leq& \displaystyle\sum_{n \in\mathbb{N}} g^{**}\(\frac{t}{M}\).
\end{eqnarray*}
Thus for $q\neq \infty$, we have
\begin{eqnarray*}
||C_{\tau^{n}}g||_{pq} &=& [ \frac{q}{p}\int_{0}^{\infty}(t^{\frac{1}{p}}(C_{\tau^{n}}g)^{**}(t))^{q}\frac{dt}{t}]^{\frac{1}{q}}\\
                       &\leq& \displaystyle\sum_{n \in\mathbb{N}} [\frac{q}{p} \int_{0}^{\infty}(t^{\frac{1}{p}}g^{**}\(\frac{t}{M}\))^{q}\frac{dt}{t}]^{\frac{1}{q}}\\
											&=& \displaystyle\sum_{n \in\mathbb{N}} [ \frac{q}{p}\int_{0}^{\infty}(M^{\frac{1}{p}}t^{\frac{1}{p}}g^{**}(t))^{q}\frac{dt}{t}]^{\frac{1}{q}}\\
											&=& M^{\frac{1}{p}} ||g||_{pq}.
\end{eqnarray*}
Also for $q=\infty$.
\begin{eqnarray*}
||C_{\tau^{n}}g||_{p\infty}	&=& \sup_{0<t<\infty} t^{\frac{1}{p}}(C_{\tau^{n}}g)^{**}(t)\\
                            &\leq& \sup_{0<t<\infty} t^{\frac{1}{p}}(g)^{**}\(\frac{t}{M}\)\\ 										
                            &\leq& M^{\frac{1}{p}} ||g||_{p\infty}.
														\end{eqnarray*}
i.e., $C_{\tau}$-orbit of $g$ is bounded, which is contradiction to the fact that $C_{\tau}$-orbit of $g$ is unbounded.\\
Conversely, Suppose condition (i) and (ii) holds and let $Y =\{ \chi_{A_{i}}:i\in I\}$ be a closed linear span in $L^{pq}(X)$. Then set $R_{1}$ of all vectors $g$ in $Y$, where $C_{\tau}$-orbit has sub-sequence converging to zero is residual in $Y$ because of condition (i).\\
Now, for $i\in I$, let $g_{i}=\frac{1}{(p')^{\frac{1}{p}}(\mu(A_{i}))^{\frac{1}{p}}}.\chi_{A_{i}} \in Y.$ Then, 
 $$||g_{i}||=1~ \mbox{and}~||C_{\tau^{n}}g_{i}||_{pq}= \frac{\mu(\tau^{-n}(A_{i}))}{\mu(A_{i})}.$$
Thus, by conditions (ii), $\displaystyle\sup_{n \in \mathbb{N}} ||C_{\tau^{n}}|_{Y}||=\infty$ and so by using the Banach Steinhaus theorem, the set $R_{2}$ of all vectors $g$ in $Y$ whose $C_{\tau}$-orbit is unbounded is residual in $Y$. Also, as $g \in R_{1}\cap R_{2}$ is an irregular vector for $C_{\tau}$, we conclude that $C_{\tau}$ is Li-Yorke chaotic.														
\end{proof}

\begin{corollary}\label{cor1}
 If $\tau$ is injective, then composition operator $C_{\tau}$ is Li-Yorke chaotic if there exists a measurable set $A$ of finite positive $\mu$-measure such that 
\begin{itemize}
\item[(a)] $\displaystyle\lim_{n \to \infty}\inf \mu(\tau^{-n}(A))=0$,\\
\item[(b)] $\sup\{\frac{\mu(\tau^{n}(A))}{\mu(\tau^{m}(A))}:n\in\mathbb{Z},m\in I,n<m\}=\infty.$
\end{itemize}
\end{corollary}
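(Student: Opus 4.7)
The plan is to deduce the corollary from Theorem~\ref{thrm1} by producing, out of the single set $A$, a countable family $\{A_i\}$ meeting conditions (i) and (ii) of that theorem. The bridge is the identity $\tau^{-n}(\tau^m(A)) = \tau^{m-n}(A)$, valid for all integers $m, n$ (with $\tau^{-k}$ interpreted as the $k$-fold preimage when $k>0$), which holds precisely because $\tau$ is injective.

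Concretely I would set $A_i := \tau^i(A)$ for $i \in I := \{i \in \mathbb{Z} : 0 < \mu(\tau^i(A)) < \infty\}$. Since $0 \in I$ the family is non-empty and each $A_i$ has finite positive measure. Condition (ii) of Theorem~\ref{thrm1} then follows directly from hypothesis (b): using the identity,
\[
\frac{\mu(\tau^{-n}(A_i))}{\mu(A_i)} = \frac{\mu(\tau^{i-n}(A))}{\mu(\tau^i(A))},
\]
and the substitution $k = i-n$ (so $n \in \mathbb{N}$ corresponds to $k < i$) identifies the supremum on the left over $i \in I,\; n \in \mathbb{N}$ with the supremum in (b), which is $\infty$.

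For condition (i), hypothesis (a) furnishes an increasing sequence $\{\beta_j\} \subset \mathbb{N}$ with $\mu(\tau^{-\beta_j}(A)) \to 0$. For each fixed $i \in I$ and $j$ large enough,
\[
\mu\bigl(\tau^{-(\beta_j + i)}(A_i)\bigr) \;=\; \mu\bigl(\tau^{-\beta_j}(A)\bigr) \;\longrightarrow\; 0,
\]
so $\liminf_n \mu(\tau^{-n}(A_i)) = 0$ for every $i \in I$. A diagonal extraction, enumerating $I$ as $\{i_1, i_2, \dots\}$ and inductively choosing $\alpha_j$ large enough that $\mu(\tau^{-\alpha_j}(A_{i_k})) < 1/j$ for all $k \leq j$, then produces a single increasing sequence $\{\alpha_j\}$ witnessing (i) for every $i \in I$. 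An application of Theorem~\ref{thrm1} gives Li-Yorke chaos of $C_\tau$.

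The delicate step is the diagonal construction, because a priori the good times for distinct $A_{i_k}$ are shifted copies of one another and need not share elements; one must argue that enough flexibility is present along $\{\beta_j\}$ (or a modification of it) to interlace finitely many of these shifted sequences at each stage. Once this is secured, the rest is a routine translation between the ``single set'' hypothesis (a), (b) of the corollary and the ``family of sets'' hypothesis (i), (ii) of the theorem, with injectivity of $\tau$ doing all the work via the shift identity for $\tau^{-n}\circ\tau^m$.
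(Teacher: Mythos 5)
The paper states this corollary without proof, so there is nothing to compare against; judged on its own terms, your reduction has one genuine gap. The choice $A_i=\tau^{i}(A)$ and the identity $\tau^{-n}(\tau^{m}(A))=\tau^{m-n}(A)$ (valid by injectivity) do translate hypothesis (b) exactly into condition (ii) of Theorem~\ref{thrm1}; that part is fine. The problem is the step you yourself flag as ``delicate'': the diagonal extraction of a single increasing sequence $\{\alpha_j\}$ with $\mu(\tau^{-\alpha_j}(A_i))\to 0$ for every $i$ simultaneously. This is not merely delicate --- it can fail outright. Hypothesis (a) only gives you the set $G_\epsilon=\{k\in\mathbb{N}:\mu(\tau^{-k}(A))<\epsilon\}$ is infinite, and you need some $N$ lying in $\bigcap_{i\in F}(i+G_\epsilon)$ for each finite $F\subset I$. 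If, say, $\mu(\tau^{-k}(A))<\epsilon$ only for $k\in\{2^j:j\in\mathbb{N}\}$ and equals $1$ otherwise, then already for $F=\{0,1\}$ no large $N$ has both $N$ and $N-1$ in $G_\epsilon$, so not even two members of your family admit a common small time. No amount of ``flexibility along $\{\beta_j\}$'' repairs this, because the good times for distinct $A_i$ really are disjoint translates of one sparse set.

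The corollary is nevertheless true, and the missing idea is a dichotomy on $L:=\limsup_{n\to\infty}\mu(\tau^{-n}(A))$. If $L>0$, then since $\|C_{\tau^{n}}\chi_A\|_{pq}^{q}=p'\bigl(\mu(\tau^{-n}(A))\bigr)^{q/p}$, hypothesis (a) together with $L>0$ makes $\chi_A$ a semi-irregular vector, and Li--Yorke chaoticity follows from Theorem 1.3 directly --- Theorem~\ref{thrm1} and hypothesis (b) are not needed in this case. If $L=0$, then $\mu(\tau^{-n}(A))\to 0$ as a full limit, hence for each fixed $i$ one has $\mu(\tau^{-n}(A_i))=\mu(\tau^{-(n-i)}(A))\to 0$, so \emph{any} increasing sequence (e.g.\ $\alpha_j=j$) witnesses condition (i) and no interlacing is required; your treatment of condition (ii) then completes the reduction. (A minor further caveat: for $i>0$ the sets $\tau^{i}(A)$ need to be measurable, which requires $\tau$ to be bimeasurable rather than merely injective; the corollary's statement of (b) already presupposes this.)
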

\begin{remark}
If $\tau$ is not injective in above Corollary \ref{cor1}, then $C_{\tau}$ need not Li-Yorke chaotic. Here is an example:
\end{remark}
\begin{example}
Let us consider $\mathbb{A}=P(X)$ and $X=(\mathbb{Z} \times \{0\})\cup (\mathbb{N}\times \mathbb{N})$. The bimeasurable map $\tau:X \to X$ be
$$\tau(i,0)=(i+2,0)~\mbox{and}~\tau(n,j)=(n,j-1)~i\in \mathbb{Z}~\mbox{and}~n,j\in \mathbb{N}.$$
The measure $\mu:\mathbb{A} \to [0,\infty)$ be defined by 
$$\mu(\{(i,0)\})=\frac{1}{3^{|i|}}~\mbox{and}~ \mu(\{(n,j)\})=\left\{\begin{array}{cc}
                \frac{1}{3^{n-j}},  ~~1\leq j<n  \\
								1,	 ~~j\geq n
									\end{array}\right.$$ 
If $A=\{(0,0)\}$, then clearly conditions of Corollary \ref{cor1} are satisfies. But however, if $A\in \mathbb{A}$ is non-empty and satisfies condition (i) of Theorem \ref{thrm1} then $A_{i}\subset \{(k,0):k\leq 0\}$ and so
		$$\displaystyle\sup_{n \in \mathbb{N}}\frac{\mu(\tau^{-n}(A_{i}))}{\mu(A_{i})}=\frac{1}{9}.$$
Thus, by Theorem \ref{thrm1}, $C_{\tau}$ is not Li-Yorke Chaotic.
\end{example}

\begin{theorem}
If $\mu$ is finite and $\tau$ is injective, then the following are equivalent:
\begin{itemize}
\item[(i)] $C_{\tau}$ is Li-Yorke chaotic.
\item[(ii)] there exists $g \in L^{pq}(X)$ such that $g \neq 0$ and $\displaystyle\lim_{n\to \infty}\inf||C_{\tau^{n}}g||_{pq}=0$.
\item[(iii)] there exist $A \in \mathbb{A}$ such that $\mu(A)>0$ and $\displaystyle\lim_{n\to \infty} \mu(\tau^{-n}(A))=0$.
\item[(iv)] there exist $A \in \mathbb{A}$ such that $\mu(A)>0$ and $\displaystyle\lim_{n\to \infty} \mu(\tau^{n}(A))=0$.
\item[(v)] there exist $A \in \mathbb{A}$ such that $\mu(A)>0$, $\displaystyle\lim_{n\to \infty} \inf \mu(\tau^{-n}(A))=0$ and $\displaystyle\lim_{n\to \infty} \inf \mu(\tau^{n}(A))=0$.
\item[(vi)] there exist $A \in \mathbb{A}$ such that $\mu(A)>0$, $\displaystyle\lim_{n\to \infty}\inf \mu(\tau^{-n}(A))=0$ and $\displaystyle\lim_{n\to \infty} \sup \mu(\tau^{-n}(A))>0$.
\item[(vii)] $C_{\tau}$ admits a characteristic function as a semi-irregular vector.
\end{itemize}
\end{theorem}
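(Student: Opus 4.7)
The plan is to establish the cyclic chain of implications (vii) $\Rightarrow$ (i) $\Rightarrow$ (ii) $\Rightarrow$ (iii) $\Rightarrow$ (iv) $\Rightarrow$ (v) $\Rightarrow$ (vi) $\Rightarrow$ (vii), using the finiteness of $\mu$ and the injectivity of $\tau$ in an essential way.

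The ``boundary'' implications are short. Both (vii) $\Rightarrow$ (i) and (i) $\Rightarrow$ (ii) reduce to Theorem 1.3 (the characterization of Li--Yorke chaos via (semi-)irregular vectors). The implication (vi) $\Rightarrow$ (vii) follows from the explicit identity
\[
||C_\tau^n \chi_A||_{pq} = (p')^{1/q}\mu(\tau^{-n}(A))^{1/p},
\]
which immediately translates (vi) into the semi-irregularity of $\chi_A$ for $C_\tau$.

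The heart of the proof is (ii) $\Rightarrow$ (iii). Taking $g \neq 0$ with $\liminf ||C_\tau^n g||_{pq} = 0$, I would follow the strategy of Theorem~\ref{thrm1}: decompose via the level sets $A_i = \{x : 2^{i-1} < |g(x)| < 2^i\}$. The pointwise lower bound $|g\circ \tau^n| \geq 2^{i-1}\chi_{\tau^{-n}(A_i)}$ together with the monotonicity of $||\cdot||_{pq}$ gives
\[
||C_\tau^n g||_{pq} \geq 2^{i-1}(p')^{1/q}\mu(\tau^{-n}(A_i))^{1/p},
\]
so $\liminf_n \mu(\tau^{-n}(A_i)) = 0$ for some $i$ with $\mu(A_i) > 0$. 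To upgrade $\liminf$ to $\lim$, I would pass to the non-recurrent subset $A_i^{\circ} := A_i \setminus \bigcup_{n\geq 1}\tau^{-n}(A_i)$: injectivity of $\tau$ makes $\{\tau^{-n}(A_i^{\circ})\}_{n\geq 0}$ pairwise disjoint, and the finiteness of $\mu$ then forces $\sum_n \mu(\tau^{-n}(A_i^{\circ})) \leq \mu(X) < \infty$, hence $\mu(\tau^{-n}(A_i^{\circ})) \to 0$. For the interior chain (iii) $\Rightarrow$ (iv) $\Rightarrow$ (v) $\Rightarrow$ (vi) I would leverage the identity $\tau^{-n}(\tau^k(A)) = \tau^{k-n}(A)$ afforded by injectivity to move freely between forward and backward orbits, combining subsequence arguments with the positivity of $\mu(A)$ to secure simultaneously $\liminf = 0$ and $\limsup > 0$ on a suitably adjusted set.

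The main obstacle is the degenerate case of the key step (ii) $\Rightarrow$ (iii), namely when $\mu(A_i^{\circ}) = 0$ for every level set of $g$: in this conservative situation every point of each $A_i$ returns to $A_i$ under some iterate of $\tau$, the disjointness argument collapses, and one must exploit the specific subsequence $(\alpha_j)$ along which $||C_\tau^{\alpha_j}g||_{pq} \to 0$ to carve out a positive-measure subset of $A_i$ whose full preimage sequence shrinks. This requires finer combinatorial control of the orbit structure than the routine disjointness argument.
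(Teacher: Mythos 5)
Your overall cycle and the easy steps match the paper exactly: (vii) $\Rightarrow$ (i) $\Rightarrow$ (ii) via the (semi-)irregular vector characterization of Li--Yorke chaos (Theorem 1.3), and (vi) $\Leftrightarrow$ (vii) via the identity $\|C_{\tau}^{n}\chi_{A}\|_{pq}^{q}=p'\,\mu(\tau^{-n}(A))^{q/p}$. The genuine gap is the one you flag yourself in (ii) $\Rightarrow$ (iii). The lower bound $\|C_{\tau}^{n}g\|_{pq}\gtrsim c\,\mu(\tau^{-n}(A))^{1/p}$ for $A=\{x:|g(x)|>c\}$ only yields $\liminf_{n}\mu(\tau^{-n}(A))=0$, whereas (iii) asserts the full limit. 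Your proposed upgrade --- pass to the non-recurrent part $A^{\circ}=A\setminus\bigcup_{n\geq1}\tau^{-n}(A)$, use pairwise disjointness of $\{\tau^{-n}(A^{\circ})\}$ and $\mu(X)<\infty$ --- is fine when $\mu(A^{\circ})>0$, but you explicitly leave the conservative case $\mu(A^{\circ})=0$ open, and that case is the entire content of the step. The missing tool is the exhaustion lemma of \cite{NC19} (their Lemma 2.1), which from any positive-measure $A$ with $\liminf_{n}\mu(\tau^{-n}(A))=0$ extracts, by a nested-subsequence argument, a subset $W\subseteq A$ with $\mu(A\setminus W)$ arbitrarily small and $\lim_{n}\mu(\tau^{-n}(W))=0$. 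Without that (or an equivalent substitute) your (ii) $\Rightarrow$ (iii) does not close. The same criticism applies to your one-sentence treatment of (iii) $\Rightarrow$ (iv) $\Rightarrow$ (v) $\Rightarrow$ (vi): injectivity does give $\tau^{-n}(\tau^{k}(A))=\tau^{k-n}(A)$, but (v) $\Rightarrow$ (vi) in particular requires constructing a new set on which $\limsup_{n}\mu(\tau^{-n}(\cdot))>0$ holds, which is not a ``subsequence argument'' but a genuine construction.

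For fairness, the paper's own write-up has the same soft spots: its proof of (ii) $\Rightarrow$ (iii) also stops at $\liminf_{k}\mu(\tau^{-k}(A))=0$ (strictly weaker than the stated (iii)), and it dispatches (iii) $\Rightarrow$ (iv) $\Rightarrow$ (v) $\Rightarrow$ (vi) with a bare citation of \cite{NC19}. So you are not on a different route --- it is the same route, with the hard measure-theoretic steps (all ultimately imported from \cite{NC19}) left unexecuted in both versions; the difference is that you at least locate precisely where the difficulty sits.
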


\begin{proof}
$(i)\implies (ii)$
Since $C_{\tau}$ is Li-Yorke chaotic. Then it admits a semi-irregular vector $g \in L^{pq}(X)$. Thus, by definition of semi-irregularity, $g \neq 0$ and $\displaystyle\lim_{n\to \infty}\inf||C_{\tau^{n}}g||=0$.\\
$(ii) \implies (iii)$ Suppose $g$ satisfies the condition (ii). Then there exists $c>0$ such that $A=\{x\in X:|g(x)|>c \}$. Clearly, $A$ is measurable and $\mu(A)>0$. 
Hence, 
\begin{eqnarray*}
||C_{\tau^{k}}g||_{pq}^{p} &=& \frac{q}{p}\int_{0}^{\infty}(t^{\frac{1}{p}}(C_{\tau^{n}}f)^{*}(t))^{q}\frac{dt}{t} \\
                           &\geq& \frac{q}{p}\int_{0}^{\mu(\tau^{-n}(A))}(t^{\frac{1}{p}} c)^{q} \frac{dt}{t} \\
													 &\geq& c^{p} \frac{q}{p}\int_{0}^{\mu(\tau^{-n}(A))}(t^{\frac{1}{p}})^{q} \frac{dt}{t} \\
                           &\geq& c^{p}. \mu(\tau^{-n}(A)).
\end{eqnarray*}
By using (ii), we see that $\displaystyle\lim_{k\to \infty}\inf \mu(\tau^{-k}(A))=0.$\\
The implication $(iii) \implies (iv)$, $(iv) \implies (v)$ and $(v) \implies (vi)$ will follows as in \cite{NC19}.\\
$(vi)\implies (vii)$ By taking $g=\chi_{A}$ for some $A \in \mathbb{A}$, we have
$$||C_{\tau^{k}}g||_{pq}^{q}=||C_{\tau^{k}}\chi_{A}||_{pq}^{q}=||\chi_{\tau^{-k}(A)}||_{pq}^{q}=p^{'}(\mu(\tau^{-k}(A)))^{\frac{q}{p}}.$$
so, (vi) and (vii) are equivalent properties.\\
$(vii) \implies (i)$ is obvious, because the existence of semi-irregular vector itself implies that $C_{\tau}$ is Li-Yorke chaotic. 

\end{proof}

\begin{theorem}
Let $\( X,\mathbb{A},\mu\)$ be a $\sigma$-finite measure space and $\tau:X \to X$ be a non-singular measurable transformation. The $C_{\tau}$ is then topological transitive if and only if $C_{\tau}$ is densely Li-Yorke chaotic. 
\end{theorem}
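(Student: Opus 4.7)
The plan is to prove the equivalence by invoking Birkhoff's transitivity theorem in one direction, and by combining the measure-theoretic characterization of Theorem \ref{thrm1} with a Hypercyclicity-Criterion type construction in the other.

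For the direction $(\Rightarrow)$, since $(X,\mathbb{A},\mu)$ is $\sigma$-finite the Lorentz space $L^{pq}(X)$ is separable, so topological transitivity of the continuous linear operator $C_{\tau}$ is equivalent, by Birkhoff, to the existence of a dense $G_{\delta}$ set $HC(C_{\tau})$ of hypercyclic vectors. Every hypercyclic vector $g$ has a dense orbit, hence there are subsequences along which $\|C_{\tau}^{n}g\|_{pq}\to 0$ and others along which $\|C_{\tau}^{n}g\|_{pq}\to\infty$; thus $g$ is irregular for $C_{\tau}$. The standard fact that a linear operator with a dense set of irregular vectors is densely Li-Yorke chaotic (one realises the scrambled set from a countable dense family of irregular vectors and verifies Li-Yorke pairs via common subsequences of approach obtained from the density of orbits) then yields dense Li-Yorke chaos.

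For $(\Leftarrow)$, a dense scrambled set supplies, via Theorem 1.3, a dense family of irregular vectors. Running the argument from the proof of Theorem \ref{thrm1} on this family produces an increasing sequence $(\alpha_j)$ and a countable collection $\{A_i\}_{i\in I}$ of sets of finite positive measure with $\mu(\tau^{-\alpha_j}A_i)\to 0$ and $\sup_{n,i}\mu(\tau^{-n}A_i)/\mu(A_i)=\infty$. I would then combine these with $\sigma$-finiteness, exhausting $X$ by a countable increasing family of sets of finite measure and performing a diagonal extraction, to obtain a single subsequence $(n_k)$ along which the Hypercyclicity Criterion holds: the dense subspace of simple functions supported on suitable $\bigcup_j \tau^{-\alpha_j}A_i$ satisfies $C_{\tau}^{n_k}f\to 0$, while for each characteristic function $\chi_B$ with $B$ of finite measure the blow-up of $\mu(\tau^{-n}A_i)/\mu(A_i)$ lets one construct approximate preimages $u_k\to 0$ with $C_{\tau}^{n_k}u_k\to\chi_B$. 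Transitivity follows immediately from the Hypercyclicity Criterion.

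The main obstacle lies in the reverse direction, in the diagonal extraction of a single common subsequence $(n_k)$. Theorem \ref{thrm1} delivers vector-by-vector measure-theoretic information, whereas transitivity demands uniform behaviour on a dense subset along one sequence. Since the Lorentz norm depends on the rearrangement $g^{**}$ rather than on $|g|^{p}$ directly, the approximation estimates for $C_{\tau}^{n_k}f\to 0$ and for the right-inverse construction must be carried through separately for the cases $q<\infty$ and $q=\infty$, and one must verify that the pull-back inequalities used in Theorem \ref{thrm1} can be strengthened to hold simultaneously along the chosen common sequence without sacrificing control of the $\|\cdot\|_{pq}$ norm.
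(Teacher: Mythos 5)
Your forward implication is essentially the paper's: separability of $L^{pq}(X)$ plus Birkhoff's transitivity theorem gives a residual set of hypercyclic vectors, each of which is irregular, and a dense set of irregular vectors yields dense Li--Yorke chaos (this is exactly the content of the remark from Bernardes--Bonilla--M\"uller--Peris that the paper cites). The genuine gap is in the converse, and it is not where you locate it. You single out the diagonal extraction of a common subsequence as the main obstacle, but the fatal step is your use of condition (ii) of Theorem \ref{thrm1} to build approximate right inverses: the blow-up of $\mu(\tau^{-n}(A_i))/\mu(A_i)$ only says that the orbits of the vectors $\chi_{A_i}$ are \emph{unbounded}; it gives no control whatsoever on forward images $\tau^{n}(B)$, which is what you actually need to produce $u_k\to 0$ with $C_{\tau}^{n_k}u_k\to\chi_B$ (morally $u_k\approx\chi_{\tau^{n_k}(B)}$, so you need $\mu(\tau^{n_k}(B))\to 0$ along some subsequence). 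Dense Li--Yorke chaos records backward-image smallness, $\liminf_n\mu(\tau^{-n}(A))=0$, together with orbit unboundedness, and neither is a forward-image condition. A secondary problem: the linear span of the level sets $A_i$ of the available irregular vectors need not be dense in $L^{pq}(X)$, so the dense set $D_1$ on which you claim $C_{\tau}^{n_k}f\to 0$ is not justified either.

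The paper's converse runs through a different, and essential, ingredient that your plan omits: starting from an irregular vector $g$ close to $\chi_X$, it produces a set $A$ of almost full measure with $\liminf_{n}\mu(\tau^{-n}(A))=0$, and then invokes the lemma of Bernardes--Darji--Pires (cited as Lemma 2.1) which upgrades this to a subset $W\subset A$, still of almost full measure, on which the \emph{forward} images also become small along a subsequence. It is this simultaneous forward-and-backward smallness on a set of nearly full measure that matches the known characterization of topological transitivity for composition operators. Without that lemma, or some substitute supplying smallness of forward images, your Hypercyclicity Criterion scheme cannot be completed.
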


\begin{proof}
Since in \cite{NC15}, it has been established that the continuous linear operator admits a dense set of irregular vectors for separable Banach space if and only if it admits a dense set of irregular vectors. By (\cite[ Remark 22]{NC15}), if an operator is topologically transitive, it is densely Li-Yorke chaotic. From this direct part follows because Lorentz space are separable.\\
Conversely, let us supposed that composition operator $C_{\tau}$ is densely Li-Yorke chaotic and let $\epsilon \in (0,min\{1,\mu(X)\}).$ Then there is an irregular vector $g$ for $C_{\tau}$ such that $$||g-\chi_{X}||_{pq}^{p}<\epsilon.$$
Taking $A=\{x \in X:|g(x)-1 |<\epsilon\}$. Then $\mu(X\setminus A)<\epsilon$.\\
Note $g$ and $\sum g(A)\chi_{A}$ are $\mu$-a.e. For each $k \in \mathbb{N}$, define $C_{\tau^{k}}$ by
$$C_{\tau^{k}}g = \sum_{k\in \mathbb{N}} g(A)\chi_{\tau^{-k}(A)}.$$
Then for each $\lambda=1-\epsilon$, we have 
\begin{eqnarray*}
\mu_{C_{\tau^{k}}g}(\lambda) &\leq& \sum_{k \in \mathbb{N},~|g(A)|>\lambda} \mu(\tau^{-k}(A))\\
                             &\leq& \displaystyle \sup_{k \in\mathbb{N}}\frac{\mu(\tau^{-k}(A))}{\mu(A)}\sum_{|g(A)|>\lambda} \mu(A)\\
														 &\leq& \displaystyle \sup_{k \in\mathbb{N}}\frac{\mu(\tau^{-k}(A))}{\mu(A)} \mu_{g}(\lambda).
\end{eqnarray*}
and so we obtain
$$||C_{\tau^{k}}g||^{p}_{pq} \leq \displaystyle \sup_{k \in\mathbb{N}}\frac{\mu(\tau^{-k}(A))}{\mu(A)}||g||_{pq} \leq \epsilon \displaystyle \sup_{k \in\mathbb{N}}\frac{\mu(\tau^{-k}(A))}{\mu(A)} .$$
Thus $\displaystyle\lim_{n \to \infty}\inf \mu(\tau^{-k}(A))=0,$ because $g$ is an irregular vector for $C_{\tau}$. By using \cite[Lemma 2.1]{NC15}, there exist a measurable set $W\subset A$ such that
$$\mu(X\setminus W)<\epsilon~\mbox{and}~\displaystyle\lim_{n \to \infty}\inf \mu(\tau^{k}(W))=0$$ 
So, $C_{\tau^{k}}$ is topologically transitive.
\end{proof}

In the next theorem, we discuss the Li-Yorke multiplication operators on Lorentz space.
\begin{theorem}
Multiplication operator $M_{\theta}$ is not Li-Yorke chaotic on $L^{pq}(X)$.
\end{theorem}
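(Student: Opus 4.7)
The plan is to invoke the characterization that a bounded linear operator is Li-Yorke chaotic iff it admits a semi-irregular vector (Theorem~1.3 above, from \cite{NC15}), and then to show by a direct monotonicity argument that $M_\theta$ admits no semi-irregular vector in $L^{pq}(X)$.

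Assume, towards a contradiction, that $g\in L^{pq}(X)$ is semi-irregular for $M_\theta$: $g\neq 0$, $\displaystyle\liminf_{n}||M_\theta^{n}g||_{pq}=0$ and $\displaystyle\limsup_{n}||M_\theta^{n}g||_{pq}>0$. Decompose $X$ according to the modulus of $\theta$:
\[
E_{+}=\{|\theta|>1\},\qquad E_{0}=\{|\theta|=1\},\qquad E_{-}=\{|\theta|<1\},
\]
and split on whether $g$ has mass on $E_{+}$.

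If $g\chi_{E_{+}}\neq 0$, then one can choose $c>1$, $\delta>0$ such that $F=\{|\theta|\geq c,\,|g|\geq\delta\}$ has positive (necessarily finite, since $g\in L^{pq}(X)$) measure. Then $|M_\theta^{n}g|\geq c^{n}\delta\,\chi_{F}$ pointwise a.e. The Lorentz norm is monotone under pointwise domination of absolute values: if $|f_{1}|\leq|f_{2}|$ a.e.\ then $\mu_{f_{1}}\leq\mu_{f_{2}}$, hence $f_{1}^{*}\leq f_{2}^{*}$, $f_{1}^{**}\leq f_{2}^{**}$, and thus $||f_{1}||_{pq}\leq||f_{2}||_{pq}$. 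Combining with the formula $||\chi_{F}||_{pq}=(p')^{1/q}\mu(F)^{1/p}$ recorded in the introduction yields $||M_\theta^{n}g||_{pq}\geq c^{n}\delta(p')^{1/q}\mu(F)^{1/p}\to\infty$, contradicting $\liminf_{n}||M_\theta^{n}g||_{pq}=0$.

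If instead $g\chi_{E_{+}}=0$, then $|\theta|\leq 1$ on $\mathrm{supp}(g)$, so $|M_\theta^{n+1}g|=|\theta||M_\theta^{n}g|\leq|M_\theta^{n}g|$ a.e. By the same monotonicity of the Lorentz norm, $(||M_\theta^{n}g||_{pq})_{n}$ is a nonincreasing sequence, so $\liminf_{n}||M_\theta^{n}g||_{pq}=\limsup_{n}||M_\theta^{n}g||_{pq}$, contradicting the strict inequality required for semi-irregularity. Both cases are impossible, so no semi-irregular vector for $M_\theta$ exists, and Theorem~1.3 delivers that $M_\theta$ is not Li-Yorke chaotic. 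The only mildly delicate step is the first case, where one needs to confirm that the pointwise exponential growth on $F$ survives as a genuine exponential lower bound on the Lorentz norm; this is handled by the chain of monotonicity through the distribution function and the rearrangements, together with the explicit computation of $||\chi_{F}||_{pq}$ from the introduction.
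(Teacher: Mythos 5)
Your proof is correct, and it follows a genuinely different --- and more complete --- route than the paper's. The paper's own argument derives $\mu_{M_{\theta}g}(s)\leq\mu_{g}(s)$ and hence $\|M_{\theta}g\|_{pq}\leq\|g\|_{pq}$ for \emph{every} $g$, concluding that no orbit can be unbounded; but that distribution-function inequality is only valid when $|\theta|\leq 1$ almost everywhere, and the paper never treats the region where $|\theta|>1$ (it introduces $E=\{|\theta|<1\}$ and then never uses it). Your decomposition into $E_{+}$, $E_{0}$, $E_{-}$ supplies exactly the missing case: when $g$ has mass on $\{|\theta|>1\}$, the set $F=\{|\theta|\geq c,\;|g|\geq\delta\}$ (obtained, say, by writing $\{|\theta|>1\}\cap\{|g|>0\}$ as a countable union over $c=1+1/k$, $\delta=1/m$ and picking a term of positive finite measure) yields $\|M_{\theta}^{n}g\|_{pq}\geq c^{n}\delta\|\chi_{F}\|_{pq}\to\infty$, which is incompatible with $\liminf_{n}\|M_{\theta}^{n}g\|_{pq}=0$; and when $g$ is supported in $\{|\theta|\leq 1\}$, your monotonicity argument is essentially the paper's argument correctly localized to $\mathrm{supp}(g)$, and it rules out $\limsup_{n}\|M_{\theta}^{n}g\|_{pq}>0$. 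Both halves rest on the monotonicity of $\|\cdot\|_{pq}$ under pointwise domination of moduli, which you justify correctly through the chain $\mu_{f_{1}}\leq\mu_{f_{2}}$, $f_{1}^{*}\leq f_{2}^{*}$, $f_{1}^{**}\leq f_{2}^{**}$. The net effect is that your version closes a genuine gap in the published proof rather than merely reproducing it.
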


\begin{proof}
Suppose on the contrary that $M_{\theta}$ is Li-Yorke chaotic. Then it admits a irregular vector $g \in L^{pq}(X)$. Let $\(n_{k}\)$ be increasing sequence of positive integers such that $\mu((M_{\theta})^{n_{k}}g) \to 0$ in $L^{pq}(X)$. Then $\mu((\theta(x))^{n_{k}}g(x)) \to 0,~\forall ~x \in X$. \\
Let $E=\{x \in X:|\theta(x)|<1 \}$. Then, clearly $E$ is measurable set with positive measure. The distribution function for $M_{\theta}$ is:
\begin{eqnarray*}
\mu_{M_{\theta}g}(s) &=& \mu\{x \in X:|M_{\theta}g(x)|>s\} \\
                   &=& \mu\{x \in X:|\theta(x)g(x)|>s\} \\
									&\leq& \mu\{ x\in X:|g(x)|>s\}. 
\end{eqnarray*}
Then for $t\geq 0$,
\begin{eqnarray*}
(M_{\theta}g)^{*}(t) &=& \inf\{x \in X: \mu_{M_{\theta}g}(s)\leq t \} \\
                   &\leq& \inf\{x \in X: \mu\{x \in X:|g(x)|>s \}\leq t\} \\
									 &\leq& g^{*}(t).
\end{eqnarray*}									
Thus,
$(M_{\theta}g)^{**}(t)\leq g^{**}(t).$
Thus, for $1<p\leq\infty,~1\leq q<\infty$
\begin{eqnarray*}
||M_{\theta}g||_{pq}^{q} &=& \frac{q}{p}\int_{0}^{\infty} (t^{\frac{1}{p}}(M_{\theta}g)^{**}(t))^{q}\frac{dt}{t} \\
                       &\leq& \frac{q}{p}\int_{0}^{\infty} (t^{\frac{1}{p}}g^{**}(t))^{q}\frac{dt}{t} \\
											&\leq& ||g||_{pq}^{q}.
											\end{eqnarray*}
Hence, for $q=\infty$
\begin{eqnarray*}
||M_{\theta}g||_{p\infty}^{q} &\leq& \sup_{0<t<\infty} t^{\frac{1}{p}}(M_{\theta}g)^{**}(t) \\
                        &=& ||g||_{p\infty}^{q},
\end{eqnarray*}											
which contradicts our assumption that $g$ is irregular vector, which completes the proof.
									
\end{proof}

\section{Expansive composition operators on Lorentz space}
In this section, we give a necessary and sufficient condition for composition operators to be expansive and uniformly expansive on $L^{pq}(X,\mathbb{A},\mu)$.

\begin{theorem}
Let $(X,\mathbb{A},\mu)$ be a $\sigma$-finite measure space and $\tau$ be a non-singular measurable transformation. Then $C_{\tau}$ is positively expansive iff for each $A \in \mathbb{A}$ with positive measure, $\displaystyle\sup_{n \in \mathbb{Z}}\mu(\tau^{-n}(A)) =\infty.$
\end{theorem}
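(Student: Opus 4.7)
The plan is to reduce the statement to Theorem \ref{Thrm1.5}(a), which characterizes positive expansiveness by the unboundedness of every nonzero orbit: $C_\tau$ is positively expansive iff $\sup_{n\in\mathbb{N}}\|C_\tau^n g\|_{pq}=\infty$ for every $g\in L^{pq}(X)\setminus\{0\}$. Thus the task reduces to showing that this orbit condition is equivalent to the measure-theoretic condition on $\tau^{-n}(A)$ in the statement.

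For the forward implication, take $A\in\mathbb{A}$ with $\mu(A)>0$. Using $\sigma$-finiteness, pick a subset $B\subset A$ with $0<\mu(B)<\infty$, so that $\chi_B\in L^{pq}(X)$ is a nonzero vector. The norm identity recorded in Section 1 gives
$$\|C_\tau^n\chi_B\|_{pq}^q=\|\chi_{\tau^{-n}(B)}\|_{pq}^q=p'\,\mu(\tau^{-n}(B))^{q/p}.$$
Theorem \ref{Thrm1.5}(a) applied to $\chi_B$ forces the left-hand side to be unbounded in $n$, hence $\sup_n\mu(\tau^{-n}(B))=\infty$, and by monotonicity $\sup_n\mu(\tau^{-n}(A))=\infty$ as well.

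For the converse, fix a nonzero $g\in L^{pq}(X)$ and choose $c>0$ such that $A_c:=\{x\in X:|g(x)|>c\}$ has positive measure. Since $\tau^{-n}(A_c)\subset\{|C_\tau^n g|>c\}$, we obtain $\mu_{C_\tau^n g}(s)\geq\mu(\tau^{-n}(A_c))$ for all $s<c$, whence $(C_\tau^n g)^{*}(t)\geq c$ on the interval $(0,\mu(\tau^{-n}(A_c)))$. Because $(C_\tau^n g)^{**}$ is the running average of $(C_\tau^n g)^{*}$, the same lower bound propagates: $(C_\tau^n g)^{**}(t)\geq c$ on that interval. Restricting the defining integral for $\|\cdot\|_{pq}$ to this interval and evaluating yields
$$\|C_\tau^n g\|_{pq}\geq c\,\mu(\tau^{-n}(A_c))^{1/p}$$
for $1\le q<\infty$, with the analogous pointwise estimate handling $q=\infty$. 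The hypothesis makes the right-hand side unbounded in $n$, so by Theorem \ref{Thrm1.5}(a) the operator $C_\tau$ is positively expansive.

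The main technical point is the propagation of the lower bound from the distribution function $\mu_{C_\tau^n g}$ through $(C_\tau^n g)^{*}$ to $(C_\tau^n g)^{**}$, so that the Lorentz norm can actually be bounded below by the measure of the preimage; the use of $\sigma$-finiteness to trade an arbitrary set of positive measure for one of finite positive measure is the other point worth noting.
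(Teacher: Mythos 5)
Your proposal is correct and follows essentially the same route as the paper: both directions reduce to Theorem \ref{Thrm1.5} via the norm identity $\|\chi_{\tau^{-n}(A)}\|_{pq}^q=p'\mu(\tau^{-n}(A))^{q/p}$ for the forward implication and the lower bound $(C_\tau^n g)^{*}\geq c$ on $(0,\mu(\tau^{-n}(A_c)))$ for the converse. Your version is in fact slightly more careful than the paper's (passing to a subset of finite measure via $\sigma$-finiteness, and consistently using $n\in\mathbb{N}$ as positive expansiveness requires, where the paper drifts between $\mathbb{N}$ and $\mathbb{Z}$).
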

\begin{proof}
First of all suppose that $C_{\tau}$ is expansive. Then by \cite[Proposition 19]{NC2018},
$$\displaystyle\sup_{n \in \mathbb{Z}} ||C_{\tau}^{n}g||_{pq}=\infty,~\mbox{for each}~g \in L^{pq}(X)\setminus \{0\}.$$
Let $A\in \mathbb{A}$ with $\mu(A)>0$ and taking $g=\chi_{A}$. Then for each $n\in \mathbb{Z}$, the non-increasing re-arrangement of $\chi_{A}$ is
$$\chi_{A}^{*}(t)=\chi_{[0,\mu(A))}(t).$$
Therefore,
\begin{eqnarray*}
||\chi_{A}||_{pq}^{q} &=& \frac{q}{p}\int_{0}^{\infty}(t^{\frac{1}{p}}\chi_{A}^{**}(t))^{q}\frac{dt}{t}\\
                      &=& p^{'}(\mu(A))^{\frac{q}{p}}.
\end{eqnarray*}

So, for each $n\in \mathbb{Z} ~\mbox{and for}~ 1\leq q<\infty ,$
\begin{eqnarray*}
	||C_{\tau}^{n}\chi_{A}||_{pq}^{p} &=& ||\chi_{\tau^{-n}(A)}||_{pq}^{p}\\
							                      &=& \sum_{n \in \mathbb{Z}}\mu(\tau^{-n}(A)).
\end{eqnarray*}
For $q=\infty, 1<p\leq \infty$, we have
\begin{eqnarray*}
||C_{\tau}^{n}\chi_{A}||_{pq}^{p} &=& \displaystyle\sup_{t\geq \mu(A)} t^{\frac{1}{p}}\chi_{A}^{**}(t) \\
                                  &=& \displaystyle\sup_{t\geq \mu(A)} \mu(\tau^{-n}(A))
																	\end{eqnarray*}
and so we get, $\displaystyle\sup_{n\in \mathbb{Z}}\mu(\tau^{-n}(A))=\infty$. This proves the direct part.\\
For the converse part, suppose $\displaystyle\sup_{n \in \mathbb{Z}}\mu(\tau^{-n}(A))=\infty$ for each $A\in \mathbb{A}$ with $\mu(A)>0$. Let $g\in L^{pq}(X)\setminus\{0\}$. Then there exist an $h>0$ such that the set $A^{'}=\{x\in X:|g(x)|>h \}$ has positive measure.\\
Now, for each $n \in \mathbb{Z}$
\begin{eqnarray*}
||C_{\tau}^{n}g||_{pq}^{p} &=& \frac{q}{p}\int_{0}^{\infty}(t^{\frac{1}{p}}(C_{\tau^{n}}g)^{*}(t))^{q}\frac{dt}{t} \\
                           &\geq& \frac{q}{p}\int_{0}^{\mu(\tau^{-n}(A^{'}))}(t^{\frac{1}{p}} h)^{q} \frac{dt}{t} \\
													 &\geq& h^{p} \frac{q}{p}\int_{0}^{\mu(\tau^{-n}(A^{'}))}(t^{\frac{1}{p}})^{q} \frac{dt}{t} \\
                           &\geq& h^{p}. \mu(\tau^{-n}(A^{'})).
\end{eqnarray*}
This implies that $	\displaystyle\sup_{n \in \mathbb{Z}}||C_{\tau}^{n}g||_{pq}=\infty.$ Thus, it follows that $C_{\tau}$ is expansive.														\end{proof}

\begin{corollary}
Let $(X,\mathbb{A},\mu)$ be a $\sigma$-finite measure space and $\tau$ be a non-singular measurable transformation. Then $C_{\tau}$ is positively expansive iff for each $A \in \mathbb{A}$ with positive measure, $\displaystyle\sup_{n \in \mathbb{N}}\mu(\tau^{-n}(A)) =\infty.$
\end{corollary}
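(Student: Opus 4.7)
This corollary is the companion of the preceding theorem with the supremum taken over $\mathbb N$ instead of $\mathbb Z$, which is the formulation that matches \emph{positive} expansivity via Theorem \ref{Thrm1.5}(a) (the previous theorem actually applies part (c), which concerns expansivity and requires invertibility, so the present corollary is the statement whose proof runs cleanly without that assumption). My plan is therefore to redo the argument of the preceding theorem essentially verbatim, replacing every use of Theorem \ref{Thrm1.5}(c) and $\sup_{n\in\mathbb Z}$ by Theorem \ref{Thrm1.5}(a) and $\sup_{n\in\mathbb N}$, and to check that each inequality still goes through.

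For the forward direction I would assume $C_\tau$ is positively expansive. By Theorem \ref{Thrm1.5}(a), this is equivalent to $\sup_{n\in\mathbb N}\|C_\tau^{n}g\|_{pq}=\infty$ for every $0\neq g\in L^{pq}(X)$. Given $A\in\mathbb A$ with $0<\mu(A)<\infty$ (using $\sigma$-finiteness to reduce to the finite-measure case if needed, since the conclusion is automatic when $\mu(A)=\infty$), I plug in $g=\chi_A$ and invoke the identity $\|\chi_{\tau^{-n}(A)}\|_{pq}^{q}=p'\mu(\tau^{-n}(A))^{q/p}$ from the introduction (with the analogous $q=\infty$ computation given in the theorem's proof). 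Taking the supremum on both sides yields $\sup_{n\in\mathbb N}\mu(\tau^{-n}(A))=\infty$.

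For the converse, suppose $\sup_{n\in\mathbb N}\mu(\tau^{-n}(A))=\infty$ for every $A$ of positive measure, and fix $0\neq g\in L^{pq}(X)$. I would choose $h>0$ so that $A'=\{x\in X:|g(x)|>h\}$ has positive measure (such $h$ exists since $g\not\equiv 0$, and $\sigma$-finiteness lets me shrink $A'$ to a finite-measure piece if required). The chain of inequalities from the preceding theorem,
\[
\|C_\tau^{n}g\|_{pq}^{p}\;\geq\;\frac{q}{p}\int_{0}^{\mu(\tau^{-n}(A'))}\!\!(t^{1/p}h)^{q}\,\frac{dt}{t}\;\geq\;h^{p}\,\mu(\tau^{-n}(A')),
\]
together with the hypothesis applied to $A'$, produces $\sup_{n\in\mathbb N}\|C_\tau^{n}g\|_{pq}=\infty$. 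A second invocation of Theorem \ref{Thrm1.5}(a) then delivers positive expansivity of $C_\tau$.

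\textbf{Main obstacle.} There is no substantive new difficulty: the corollary is effectively a re-indexing of the theorem once the correct clause of Theorem \ref{Thrm1.5} is used. The only point requiring a little bookkeeping is ensuring the auxiliary set ($A$ in the forward direction, $A'$ in the converse) has \emph{finite} positive measure so that the norm formula for characteristic functions applies and the lower bound integral is well-defined; $\sigma$-finiteness of $(X,\mathbb A,\mu)$ handles this routinely.
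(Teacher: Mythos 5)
Your proposal matches the paper's intent exactly: the paper disposes of this corollary with the single remark that the proof follows from the preceding theorem by replacing $\mathbb{Z}$ with $\mathbb{N}$, and you carry out precisely that re-indexing, correctly pairing it with part (a) of Theorem \ref{Thrm1.5} rather than part (c). Your additional bookkeeping about reducing to sets of finite positive measure is a sensible (and harmless) refinement of the same argument.
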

The proof will directly follows from the above theorem by replacing $\mathbb{Z}$ by $\mathbb{N}$.

\begin{theorem}\label{Thrm expansive}
 Let $(X,\mathbb{A},\mu)$ be $\sigma$-finite measure space and $\tau$ be the non-singular measurable transformation. Let $A_{n}$ be all the atoms of $X$ and
assume that $\mu(A_{n}) = a_{n} > 0, $ for each n. Then $C_{\tau}$ is uniformly positively expansive iff $$\displaystyle \lim_{n \to \infty}\frac{\mu(\tau^{-n}(A))}{\mu(A)}=\infty$$ uniformly with respect to $A\in \mathbb{A^{+}}$, where $\mathbb{A^{+}}=\{A \in \mathbb{A}: 0< \mu(A)<\infty \}.$ 
\end{theorem}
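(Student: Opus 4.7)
\emph{Strategy.} By Theorem \ref{Thrm1.5}(b), $C_{\tau}$ is uniformly positively expansive if and only if $\|C_{\tau}^n g\|_{pq}\to\infty$ uniformly on the unit sphere $S_{L^{pq}(X)}$. My plan is to prove the equivalence of this uniform growth with the uniform divergence of the measure ratio $\mu(\tau^{-n}(A))/\mu(A)$ over $A\in \mathbb{A}^+$, by testing the operator against normalized characteristic functions in one direction, and converting level-set information into Lorentz-norm lower bounds in the other.

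\emph{Necessity.} Assume the uniform norm growth and, for each $A\in\mathbb{A}^+$, place the unit vector $g_A := \chi_A/\|\chi_A\|_{pq}$ in $S_{L^{pq}(X)}$. The identity $\|\chi_B\|_{pq} = (p')^{1/q}\mu(B)^{1/p}$ recorded in the introduction gives
\begin{equation*}
\|C_{\tau}^n g_A\|_{pq} \;=\; \frac{\|\chi_{\tau^{-n}(A)}\|_{pq}}{\|\chi_A\|_{pq}} \;=\; \left(\frac{\mu(\tau^{-n}(A))}{\mu(A)}\right)^{1/p},
\end{equation*}
so the uniform divergence of $\|C_{\tau}^n g\|_{pq}$ on the whole sphere implies, a fortiori, uniform divergence of the ratio along the sub-family $\{g_A : A\in\mathbb{A}^+\}$.

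\emph{Sufficiency (the main obstacle).} Set $M_n := \inf_{A\in\mathbb{A}^+}\mu(\tau^{-n}(A))/\mu(A)$; by hypothesis $M_n\to\infty$. Fix $g\in S_{L^{pq}(X)}$. For every $h>0$ the pointwise inequality $|C_{\tau}^n g|\geq h\,\chi_{\tau^{-n}(\{|g|>h\})}$ yields
\begin{equation*}
\|C_{\tau}^n g\|_{pq} \;\geq\; (p')^{1/q}\,h\,\mu(\tau^{-n}(\{|g|>h\}))^{1/p} \;\geq\; (p')^{1/q}\,M_n^{1/p}\,h\,\mu_g(h)^{1/p}.
\end{equation*}
The delicate point --- and the principal obstacle --- is that a single scale $h$ controls only a weak-type quantity and is not comparable to $\|g\|_{pq}$ in general. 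The remedy is to synthesize these estimates across all dyadic scales $h=2^k$ via the standard Lorentz-norm equivalence
\begin{equation*}
\|g\|_{pq}^q \;\sim\; \sum_{k\in\mathbb{Z}}\bigl(2^k\mu_g(2^k)^{1/p}\bigr)^q \qquad (q<\infty),
\end{equation*}
with $\sup$ in place of the sum when $q=\infty$. Applying the same equivalence to $C_{\tau}^n g$, whose distribution function at height $2^k$ equals $\mu(\tau^{-n}(\{|g|>2^k\}))\geq M_n\mu_g(2^k)$, produces
\begin{equation*}
\|C_{\tau}^n g\|_{pq}^q \;\gtrsim\; \sum_{k}\bigl(2^k (M_n\mu_g(2^k))^{1/p}\bigr)^q \;=\; M_n^{q/p}\sum_k\bigl(2^k\mu_g(2^k)^{1/p}\bigr)^q \;\sim\; M_n^{q/p}\|g\|_{pq}^q,
\end{equation*}
with constants independent of $g$. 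Since $M_n\to\infty$, this gives $\|C_{\tau}^n g\|_{pq}\to\infty$ uniformly on $S_{L^{pq}(X)}$, and Theorem \ref{Thrm1.5}(b) closes the argument. The atomic hypothesis on $(X,\mathbb{A},\mu)$ enters to guarantee that the relevant dyadic level sets $\{|g|>2^k\}$ belong to $\mathbb{A}^+$ at every scale on which they are non-trivial, making the uniform ratio estimate $M_n$ applicable to them throughout the dyadic decomposition.
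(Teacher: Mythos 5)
Your proposal is correct and its necessity half is essentially identical to the paper's: both test the uniform growth criterion of Theorem \ref{Thrm1.5}(b) against normalized indicators $\chi_A/\|\chi_A\|_{pq}$ and read off the ratio $\mu(\tau^{-n}(A))/\mu(A)$ from the formula $\|\chi_A\|_{pq}=(p')^{1/q}\mu(A)^{1/p}$. The sufficiency half rests on the same core inequality in both arguments, namely $\mu_{C_{\tau}^{n}g}(\lambda)\geq M_n\,\mu_g(\lambda)$, but the execution differs: the paper first reduces to simple functions $g=\sum g(A_n)\chi_{A_n}$ supported on the atoms, sums the distribution function over atoms, and then appeals (rather briefly) to density of simple functions to pass to all of $S_{L^{pq}(X)}$; you instead work with an arbitrary unit vector $g$ directly and convert the distribution-function comparison into a norm comparison $\|C_{\tau}^n g\|_{pq}\gtrsim M_n^{1/p}\|g\|_{pq}$ via the dyadic characterization of the Lorentz norm. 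Your route is cleaner in that it bypasses the atomic structure and the density step entirely (equivalently, one could note that $\mu_{C_\tau^n g}\geq M_n\mu_g$ gives $(C_\tau^n g)^*(t)\geq g^*(t/M_n)$ and change variables in the defining integral). One small correction: the atomic hypothesis is not what guarantees that the dyadic level sets $\{|g|>2^k\}$ lie in $\mathbb{A}^{+}$; that follows already from $g\in L^{pq}(X)$ with $p<\infty$, which forces $\mu_g(h)<\infty$ for every $h>0$, while level sets of measure zero contribute nothing on either side (by non-singularity of $\tau$). So your argument in fact works without the atomic assumption, which the paper uses only to organize its simple-function reduction.
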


\begin{proof}
Suppose $C_{\tau}$ is uniformly positively expansive. Then by Theorem \ref{Thrm1.5} $$\displaystyle \lim_{n \to \infty}||C_{\tau}^{n}g||_{pq}=\infty, ~\mbox{uniformly on}~S_{L^{pq}(X)}.$$\\
Let $g=\frac{\chi_{A}}{\mu(A)^{\frac{1}{p}}}$, for all $A\in \mathbb{A}^{+}$. Then
\begin{eqnarray*}
||C_{\tau}^{n}g||_{pq}^{p} &=& ||C_{\tau}^{n} \frac{\chi_{A}}{\mu(A)^{\frac{1}{p}}}||_{pq}^{p}\\
                           &=& \frac{|| \chi_{\tau^{-n}(A)}||_{pq}^{p}}{\mu(A)} \\
													&=& (p^{'})^{\frac{p}{q}} \frac{\mu(\tau^{-n}(A))}{\mu(A)}
\end{eqnarray*} 
and so, $$\infty=\displaystyle\lim_{n\to \infty}||C_{\tau}^{n}g||_{pq}^{p}=(p^{'})^{\frac{p}{q}} \displaystyle\lim_{n\to \infty}\frac{\mu(\tau^{-n}(A))}{\mu(A)}.$$
This implies that, $\displaystyle\lim_{n\to \infty}\frac{\mu(\tau^{-n}(A))}{\mu(A)}= \infty$, uniformly on $A\in \mathbb{A}^{+}.$\\
For the converse part, according to Theorem\ref{Thrm1.5}, it will be enough to show that $\displaystyle\lim_{n\to \infty} ||C_{\tau}^{n}g||_{pq}=\infty$, uniformly on $S_{L^{pq}(X)}$ for simple functions.\\
By the given conditions, let $M>0$, there exists $m\in \mathbb{N}$ such that for each atoms $A_{n},~\frac{\mu(\tau^{-n}(A_{n}))}{\mu(A_{n})}>M,~~\forall~n\geq m.$\\
Let $g \in S_{L^{pq}(X)}$ be simple functions i.e., $g =\sum g(A_{n})\chi_{A_{n}},$ where $(X,\mathbb{A},\mu)$ be atomic with atoms $A_{n}$. Note that $g$ and $\sum g(A_{n})\chi_{A_{n}}$ are equal $\mu$-a.e. Then for $n \geq m$ and $\lambda>0$, we have
\begin{eqnarray*}
\mu_{C_{\tau}^{n}g}(\lambda) &=& \displaystyle\sum_{n\geq m,|g(A_{n})|>\lambda} \mu(\tau^{-n}(A_{n}))\\
                             &\geq& \displaystyle\sum_{n\geq m,|g(A_{n})|>\lambda} M \mu(A_{n}) \\
														&=& M \displaystyle\sum_{n\geq m,|g(A_{n})|>\lambda}  \mu(A_{n}).
\end{eqnarray*}
Therefore,	$||	C_{\tau}^{n}g||_{pq}^{p} \geq M||g||_{pq}^{p} \geq M.$\\
Thus, for each $M>0$, there exist $n \geq m$ such that for each simple function $g \in S_{L^{pq}(X)}$,
$$||	C_{\tau}^{n}g||_{pq}^{p} \geq M,~\forall n \geq m$$
i.e., $\displaystyle\lim_{n\to \infty}||C_{\tau}^{n}g||_{pq}^{p}=\infty.$

\end{proof}

\begin{theorem}
Let $(X,\mathbb{A},\mu)$ be a $\sigma$-finite measure space and $\tau$ be a non-singular measurable transformation. Let $A_{n}$ be all the atoms of $X$ and
assume that $\mu(A_{n}) = a_{n} > 0, $ for each n. Then $C_{\tau}$ is uniformly expansive iff $\mathbb{A}^{+}$ can be splitted as $\mathbb{A}^{+}=\mathbb{A}_{B}^{+}\cup \mathbb{A}_{C}^{+}$ where 
$$\displaystyle \lim_{n \to \infty}\frac{\mu(\tau^{n}(A))}{\mu(A)}=\infty,~\mbox{uniformly on $\mathbb{A}_{B}^{+}$},$$
$$\displaystyle \lim_{n \to \infty}\frac{\mu(\tau^{-n}(A))}{\mu(A)}=\infty,~\mbox{uniformly on $\mathbb{A}_{C}^{+}$}.$$
\end{theorem}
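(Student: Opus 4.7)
The plan is to adapt the proof of Theorem \ref{Thrm expansive} to the two-sided setting, using part (d) of Theorem \ref{Thrm1.5} in place of part (b). The key bridge used in both directions is the identity
\[
\bigl\|C_\tau^{\pm n}\bigl(\chi_A/((p')^{1/q}\mu(A)^{1/p})\bigr)\bigr\|_{pq}^{p} \;=\; \frac{\mu(\tau^{\mp n}(A))}{\mu(A)},
\]
which is immediate from $\|\chi_E\|_{pq}^{q}=p'(\mu(E))^{q/p}$ together with $C_\tau^{\pm n}\chi_A=\chi_{\tau^{\mp n}(A)}$. Note that since uniform expansivity forces $C_\tau$ to be invertible, on a $\sigma$-finite space one may take $\tau$ bijective modulo null sets, so $C_\tau^{-n}$ really does correspond to preimages under $\tau^{n}$.

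For the forward direction, assume $C_\tau$ is uniformly expansive. By Theorem \ref{Thrm1.5}(d) there is a decomposition $S_{L^{pq}(X)}=\mathcal{U}\cup\mathcal{V}$ such that $\|C_\tau^{n}g\|_{pq}\to\infty$ uniformly on $\mathcal{U}$ and $\|C_\tau^{-n}g\|_{pq}\to\infty$ uniformly on $\mathcal{V}$. For each $A\in\mathbb{A}^{+}$, place $A$ in $\mathbb{A}^{+}_{C}$ or in $\mathbb{A}^{+}_{B}$ according as the normalized characteristic $\chi_A/((p')^{1/q}\mu(A)^{1/p})$ lies in $\mathcal{U}$ or in $\mathcal{V}$. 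The displayed identity above then transfers the two uniform limits to the required uniform limits of $\mu(\tau^{-n}(A))/\mu(A)$ on $\mathbb{A}^{+}_{C}$ and of $\mu(\tau^{n}(A))/\mu(A)$ on $\mathbb{A}^{+}_{B}$.

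For the converse, I would mimic the simple-function argument used in the converse of Theorem \ref{Thrm expansive}. Given the splitting $\mathbb{A}^{+}=\mathbb{A}^{+}_{B}\cup\mathbb{A}^{+}_{C}$, for a simple function $g=\sum g(A_n)\chi_{A_n}$ on the atoms $A_n$, decompose $g=g_B+g_C$ by collecting those atoms that lie in $\mathbb{A}^{+}_{B}$ respectively $\mathbb{A}^{+}_{C}$. Since the atoms are disjoint and $\tau$ is measurable, the supports of $C_\tau^{\pm n}g_B$ and $C_\tau^{\pm n}g_C$ remain disjoint. Assign $g$ to $\mathcal{U}$ or $\mathcal{V}$ according to whichever of $\|g_C\|_{pq}$, $\|g_B\|_{pq}$ is larger, and then repeat the distribution-function estimate of the previous theorem on the dominant piece to conclude $\|C_\tau^{n}g\|_{pq}^{p}\gtrsim M\|g_C\|_{pq}^{p}$ or $\|C_\tau^{-n}g\|_{pq}^{p}\gtrsim M\|g_B\|_{pq}^{p}$. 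Invoking Theorem \ref{Thrm1.5}(d) completes the argument.

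The main obstacle I anticipate lies in this converse direction: making the decomposition $g=g_B+g_C$ and the corresponding classification $\mathcal{U}/\mathcal{V}$ behave uniformly across all of $S_{L^{pq}(X)}$, not merely across simple functions on the atoms. The atomic hypothesis is exactly what permits the simple-function reduction to be promoted to the full unit sphere by a density/continuity argument, so the final step should parallel (and essentially re-use) the last paragraph of the proof of Theorem \ref{Thrm expansive}.
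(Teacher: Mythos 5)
Your proposal is correct and follows essentially the same route as the paper's own proof: Theorem \ref{Thrm1.5}(d) plus normalized characteristic functions $\chi_A/\mu(A)^{1/p}$ for the forward direction, and the decomposition $g=g_B+g_C$ of simple functions over the atoms, the distribution-function estimate, and a density argument for the converse. If anything, your bookkeeping is slightly more careful than the paper's, whose labels $B$ and $C$ are swapped relative to the statement (membership of $\chi_A/\mu(A)^{1/p}$ in the set where $\|C_\tau^{n}g\|_{pq}\to\infty$ controls $\mu(\tau^{-n}(A))/\mu(A)$, not $\mu(\tau^{n}(A))/\mu(A)$); the density step at the end is left equally informal in both arguments.
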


\begin{proof}
Suppose $C_{\tau}$ is uniformly expansive. Then by part(d) of Theorem\ref{Thrm1.5}, $S_{L^{pq}(X)} =B \cup C,$ where 
$$\displaystyle\lim_{n\to \infty}||C_{\tau}^{n}g||_{pq}=\infty,~\mbox{uniformly on B}~\mbox{and}~\displaystyle\lim_{n\to \infty}||C_{\tau}^{-n}(g)||_{pq}=\infty,~\mbox{uniformly on C}.$$
This implies $\mathbb{A}^{+}=\mathbb{A}_{B}^{+}\cup \mathbb{A}_{C}^{+}$, where $\mathbb{A}_{B}^{+}=\{A \in \mathbb{A}^{+}:\frac{\chi_{A}}{(\mu(A))^{\frac{1}{p}}} \in B\}$ and $  \mathbb{A}_{C}^{+}=\{A \in \mathbb{A}^{+}:\frac{\chi_{A}}{(\mu(A))^{\frac{1}{p}}} \in C\}.$ So, by Theorem\ref{Thrm expansive}, we see that
$$\displaystyle\lim_{n \to \infty} \frac{\mu(\tau^{n}(A))}{\mu(A)}=\infty,~\mbox{uniformly on $\mathbb{A}_{B}^{+}$}~\mbox{and}~\displaystyle\lim_{n \to \infty} \frac{\mu(\tau^{-n}(A))}{\mu(A)}=\infty,~\mbox{uniformly on $\mathbb{A}_{C}^{+}$},$$ which proves the direct part.\\
In order to prove the converse part, it is sufficient to prove using again part(d) of Theorem\ref{Thrm1.5}, the existence of $B$ and $C$ such that $S_{L^{pq}(X)}=B \cup C,$ with $$\displaystyle\lim_{n \to \infty}||C_{\tau}^{n}g||_{pq}=\infty,~\mbox{uniformly on B}~\mbox{and}~\displaystyle\lim_{n \to \infty}||C_{\tau}^{-n}g||_{pq}=\infty,~\mbox{uniformly on C}.$$
By given hypothesis, for $M>0$, there exist $m \in \mathbb{N},$ such that for all functions of type $g=\frac{\chi_{A}}{(\mu(A))^{\frac{1}{p}}}$ with $A \in \mathbb{A}^{+}$, $||C_{\tau}^{n}g||_{pq}^{p} \geq M,~\mbox{or}~||C_{\tau}^{-n}g||_{pq}^{p}\geq M,~\forall n \geq m.$
We have proved it for simple function $g \in S_{L^{pq}(X)}$. Let $\hat{S}_{L^{pq}(X)}$ be the collection of all simple functions in $S_{L^{pq}(X)}$. First we find the two sets of simple function in $\hat{S}_{L^{pq}(X)}$, denoted by $\hat{B}$ and $\hat{C}$ such that one has $\hat{S}_{L^{pq}(X)}=\hat{B}\cup \hat{C}$, with 
$$\displaystyle\lim_{n \to \infty}||C_{\tau}^{n}g||_{pq}=\infty,~\mbox{uniformly on $\hat{B}$}~\mbox{and}~\displaystyle\lim_{n \to \infty}||C_{\tau}^{-n}g||_{pq}=\infty,~\mbox{uniformly on $\hat{C}$}.$$
By hypothesis, for $M>0$, there exist $\bar{n}\in \mathbb{N}$ such that for each $n \geq \bar{n},$
$$\frac{\mu(\tau^{n}(A))}{\mu(A)} >M,~\mbox{for each}~A \in \mathbb{A}_{\hat{B}}^{+}~\mbox{and}~\frac{\mu(\tau^{-n}(A))}{\mu(A)} >M,~\mbox{for each}~A \in \mathbb{A}_{\hat{C}}^{+}.$$
Let $g \in \hat{S}_{L^{pq}(X)}$ be simple function i.e., $g=\sum g(A_{n})\chi_{A_{n}},$ where $A_{n}$ is atom with measure space $(X,\mathbb{A},\mu)$ is atomic. Write $g=g_{A_{B}^{+}}+g_{A_{C}^{+}} \in \hat{S}_{L^{pq}(X)}$. Then 
$$g_{A_{B}^{+}}=\sum_{A_{n} \in A_{B}^{+} }g(A_{n})\chi_{A_{n}}~\mbox{and}~g_{A_{C}^{+}}=\sum_{A_{n} \in A_{C}^{+} }g(A_{n})\chi_{A_{n}}.$$
Since $||g||_{pq}^{p}=||g_{A_{B}^{+}}||_{pq}^{p}+||g_{A_{C}^{+}}||_{pq}^{p}=1.$ So, either $||g_{A_{B}^{+}}||_{pq}^{p}\geq \frac{1}{2}~\mbox{or}~ ||g_{A_{C}^{+}}||_{pq}^{p} \geq \frac{1}{2}.$\\
In the very first case, for each $n \geq \bar{n}$, $\lambda>0~\mbox{and for} ~A_{n} \in A_{B}^{+} $, we have
\begin{eqnarray*}
\mu_{C_{\tau}^{-n}g}(\lambda) &=& \displaystyle\sum_{n \geq \bar{n},|g(A_{n})|>\lambda}\mu(\tau^{n}(A_{n})) \\
                             &\geq& \displaystyle\sum_{n \geq \bar{n},|g(A_{n})|>\lambda} M \mu(A_{n}) \\
														&=& M \displaystyle\sum_{n \geq \bar{n},|g(A_{n})|>\lambda}  \mu(A_{n}).
\end{eqnarray*}
Therefore, $||C_{\tau}^{-n}g||_{pq}^{p} \geq M ||g||_{pq}^{p} > \frac{M}{2}.$\\
Further, for each $n \geq \bar{n}$, $\lambda>0~\mbox{and for} ~A_{n} \in A_{C}^{+} $, we have
\begin{eqnarray*}
\mu_{C_{\tau}^{n}g}(\lambda) &=& \displaystyle\sum_{n \geq \bar{n},|g(A_{n})|>\lambda}\mu(\tau^{-n}(A_{n})) \\
                             &\geq& \displaystyle\sum_{n \geq \bar{n},|g(A_{n})|>\lambda} M \mu(A_{n}) \\
														&=& M \displaystyle\sum_{n \geq \bar{n},|g(A_{n})|>\lambda}  \mu(A_{n}).
\end{eqnarray*}
Therefore, $||C_{\tau}^{n}g||_{pq}^{p} \geq M ||g||_{pq}^{p} > \frac{M}{2}.$\\
From above, it follows that $ \hat{S}_{L^{pq}(X)}=\hat{B} \cup \hat{C},$ we have 
$$\hat{B}=\{ g \in \hat{S}_{L^{pq}(X)}:||g_{A_{B}^{+}}||_{pq}^{p}\geq \frac{1}{2} \}~\mbox{and}~\hat{C}=\{ g \in \hat{S}_{L^{pq}(X)}:||g_{A_{C}^{+}}||_{pq}^{p}\geq \frac{1}{2} \}.$$
Thus result is proved for simple functions. Since simple functions are dense in $L^{pq}(X)$, it follows that $S_{L^{pq}(X)}=B\cup C$ with $$\displaystyle\lim_{n \to \infty}||C_{\tau}^{n}g ||_{pq}=\infty,~\mbox{uniformly on B}~\mbox{and}~\displaystyle\lim_{n \to \infty}||C_{\tau}^{-n}g ||_{pq}=\infty,~\mbox{uniformly on C}.$$
\end{proof}


\end{document}